\documentclass[10pt,leqno]{amsart}
\usepackage{graphicx}
\usepackage{mathrsfs}

\usepackage{amssymb,amsthm,amsmath}
\usepackage{xcolor,paralist,hyperref,fancyhdr,etoolbox}
\newtheorem{theorem}{Theorem}[]

\newtheorem{example}[theorem]{Example}
\newtheorem{lemma}[theorem]{Lemma}

\newtheorem{remark}[theorem]{Remark}

\newcommand{\X}{\mathcal{X}}
\newcommand{\AC}{AC([0,T], \X)}

\newcommand{\vf}{\varphi}

\newcommand{\eproof}{\rule{0.2cm}{0.2cm}}

\newcommand{\re}{\mathbb{R}}

\hypersetup{ colorlinks=true, linkcolor=black, filecolor=black, urlcolor=black }
\def\proof{\noindent {\it Proof. $\, $}}

\begin{document}
\title{The generalized Duhamel principle for fully coupled systems of fractional order} 
\author[Sabir Umarov]{Sabir Umarov}
\date{\today}
\address{University of New Haven, Dept of Mathematics}
\email{sumarov@newhaven.edu}

\let\thefootnote\relax
\footnotetext{MSC2020: Primary 34A08} 

\begin{abstract}
Duhamel’s principle reduces the Cauchy problem for an inhomogeneous partial differential equation to the corresponding homogeneous problem. In the fractional-order setting, the classical principle does not apply directly because fractional derivatives are nonlocal in time. Over the past two decades, several fractional analogues of Duhamel’s principle have been developed to address this issue.

In this paper, we establish a fractional version of Duhamel’s principle for fully coupled systems of fractional differential-operator equations. The result provides a systematic reduction of inhomogeneous fractional problems to homogeneous ones while preserving the structure of the classical method. In the limit of integer-order derivatives, the formulation recovers the classical Duhamel principle and also reveals effects specific to coupled fractional systems, including those produced by coupled fractional impulses.
\end{abstract} 

\maketitle

\section{Introduction} 
\label{sec:1}

\setcounter{section}{1} \setcounter{equation}{0} 

Duhamel’s principle is a classical tool in the theory of linear evolution equations, providing a method to reduce {inhomogeneous problems} to {homogeneous ones}. Fourier’s early work on the heat equation 
laid the analytical foundations for superposition methods and integral representations of solutions. Building on these ideas,  Duhamel formulated in the 1830s a general principle for reducing the Cauchy problem for an inhomogeneous linear partial differential equation to the corresponding homogeneous problem. This reduction principle, now known as Duhamel’s principle, was subsequently employed by Kirchhoff in the study of wave propagation, leading to explicit solution formulas for the wave equation. Later, Hadamard placed these representations within a rigorous analytical framework by developing the modern theory of the Cauchy problem for hyperbolic equations, clarifying issues of well-posedness, causality, and the validity of Kirchhoff type formulas. Since then, Duhamel’s principle has become a standard tool in the analysis of linear evolution equations, with applications ranging from parabolic and hyperbolic partial differential equations to nonlinear dispersive and quantum-mechanical models \cite{Sogge,Tao}.

In recent decades, the development of {fractional evolution equations} has motivated attempts to generalize Duhamel’s principle to nonlocal-in-time operators, such as Caputo and Riemann--Liouville derivatives \cite{US06,US07,Umarov2012}.

In the classical case, for a linear evolution equation with a first-order time derivative
\begin{equation*}
\frac{\partial u}{\partial t} = A u + f(t), \quad u(0) = 0,
\end{equation*}
the solution of an inhomogeneous equation can be expressed as a {superposition of solutions of homogeneous problems}
\[
\frac{\partial v}{\partial t} = Av, \ t>\tau, \quad v(\tau) = f (\tau),
\] 
driven by \emph{instantaneous impulses} at each past time $\tau.$ Namely, 
\[
u(t) = \int_0^t e^{(t-\tau)A} f(\tau)\, d\tau,
\]
where $f(\tau)$ acts as a pointwise impulse initiating homogeneous evolution from time $\tau$. This representation relies critically on the semigroup property and the local-in-time nature of the derivative, allowing {reducibility} of the nonhomogeneous problem to homogeneous subproblems.

For fractional evolution equations with a Caputo derivative
\begin{equation*}
D_t^\alpha u(t) = A u(t) + f(t), \quad 0<\alpha<1, \quad u(0)=0,
\end{equation*}
the classical semigroup approach fails due to \emph{memory effects}: forcing at time $\tau$ affects all future times nonlocally. Consequently, the classical Duhamel reduction cannot be applied directly.
The fractional analogue of Duhamel’s principle was discussed in \cite{US06,US07}, leading to the following formula:
\begin{equation*}
u(t) = \int_0^t S_\alpha(t-\tau) D_\tau^{1-\alpha} f(\tau)\, d\tau,
\end{equation*}
where $S_\alpha(t)$ is the solution operator of the homogeneous fractional problem 
\begin{equation}
\label{fr_impulse}
D_t^\alpha v = A v, \ t>\tau,  \quad v(\tau)=D_\tau^{1-\alpha} f(\tau),
\end{equation}
with the "fractional impulse” applied to the forcing. Here, each integrand can be viewed as a homogeneous fractional problem initiated at time $\tau$ by a fractional impulse, but the evolution now depends on the entire past history rather than only the initial value at $\tau$. The key features of this approach are: 
\begin{enumerate}
\item
 the  fractional Duhamel principle reduces the inhomogeneous problem to homogeneous fractional equations;
\item the semigroup property and time-translation invariance are lost, but the essential mechanism is preserved: the same solution operator associated with the homogeneous system is used to construct the solution of the nonhomogeneous problem through a memory-integral (Duhamel-type) representation;
\item in the limit $\alpha \to 1$, the classical Duhamel formula is recovered.
\end{enumerate}

There is a substantial literature on fractional order evolution differential-operator equations. Below, we mention only works that are closely related to the representation of solutions to such equations. Kochubei \cite{Koc89} studied existence and uniqueness of a solution to the abstract Cauchy problem $D_{\ast}^{\alpha}u(t)=Au(t), \, u(0)=u_0,$ with Caputo-Djrbashian fractional derivative for $0<\alpha<1$ and a closed operator $A$ with a dense domain ${\mathcal{D}}(A)$ in a Banach space. El-Sayed \cite{E-S95} and Bazhlekova \cite{Bazhlekova2001} investigated the Cauchy problem for $0<\alpha<2.$ In the more general case of $\alpha >0,$ Gorenflo et. al. \cite{GLZ99} studied existence of solutions in Roumieu-Beurling and Gevrey classes. Kostin \cite{Kos93} proved that the abstract initial value problem (Cauchy type problem) $D_+^{\alpha}u(t)=Au(t), \, D_+^{\alpha-k}u(0)=\varphi_k, \, k=1,...,m,$ where $\alpha \in (m-1,m)$ and  $D_+^{\alpha}$ is the
Rieman-Liouville derivative, is well-posed. For more information
about recent results on the Cauchy problem for abstract fractional
differential-operator equations, we refer the reader to
\cite{Bazhlekova2001,EidKoch2004,KilbasST}; and for a recent mathematical
treatment of the distributed fractional order differential equations
to papers \cite{Kochubei2008,MSh,UG2005}.
These approaches to fractional evolution equations, including Bazhlekova \cite{Bazhlekova2001}, Mainardi et al. \cite{Mainardi2001}, and Kochubei \cite{Koc89}, provide rigorous solution representations and fundamental solutions. However, these methods do \emph{not} produce a Duhamel-type reduction: the inhomogeneous solution is not expressed as a superposition of homogeneous problems initiated by fractional impulses. 
The  fractional analog of Duhamel's principle, in the particular case of fractional order partial differential equations with a single "time-fractional" term in the evolution equation, was obtained in \cite{US06,US07}, for distributed order evolution equations in \cite{Umarov2012}. See also \cite{Umarov2015,Umarov2019} for the survey of the topic and \cite{Ali,Alloubaa,Neto,Chen,GaoWang,Ibrahim,Li,Mijena_1,Mijena_2,Mirjana1,Umarov2019,Wen} for various applications of the fractional Duhamel principle.
As regards evolution equations involving the fractional derivative in the sense of Riemann–Liouville with classical initial (Cauchy) conditions, such problems are ill-posed. Consequently, the notion of an “initial impulse,” discussed above, is not applicable. Nevertheless, one key feature of the Duhamel principle can be retained. Specifically, in an appropriate formulation, the same solution operator associated with the homogeneous problem can also be used to solve the nonhomogeneous problem.

In the current paper we establish a fractional analog of Duhamel's principle for fully-coupled distributed order systems of differential-operator equations of the following form
\begin{equation}
\label{s500}
A \circ \mathfrak{D}^{\mathscr{A}} U(t) = F \ U(t)+H(t), \quad t>0, 
\end{equation}
with the initial condition 
\begin{equation}
\label{s500i}
U(0)=\Phi.  
\end{equation}
The operator \( A \circ \mathfrak{D}^{\mathscr{A}} \) on the left-hand side of system \eqref{s500} is defined as the Hadamard (entry-wise) product of the matrix 
$A = (a_{j,k})_{j,k=1}^m$ and the matrix-valued operator 
$\mathfrak{D}^{\mathscr{A}} = (D^{\alpha_{j,k}})_{j,k=1}^m,$ 
where $D^{\alpha_{j,k}} $ denotes the Riemann--Liouville or Caputo--Djrbashian fractional derivative of order $ \alpha_{j,k}.$
That is, the operator on the left hand side  of \eqref{s500} reads:
\begin{equation}
\label{CapDer}
A \circ \mathfrak{D}^{\mathscr{A}}=
\begin{bmatrix} a_{11} {D}^{ \alpha_{11}} & a_{12} {D}^{\alpha_{12}} & \dots & a_{1m}{D}^{\alpha_{1m}} \\
a_{21}{D}^{\alpha_{2 1}} & a_{22} {D}^{\alpha_{22}} & \dots & a_{2m} {D}^{\alpha_{2m}} \\
\dots & \dots & \dots & \dots \\
a_{m1} {D}^{\alpha_{m 1} }& a_{m2} {D}^{\alpha_{m2}} & \dots &  a_{mm} {D}^{\alpha_{mm}}
\end{bmatrix},
\end{equation}
The order $\mathscr{A}$ of the operator $\mathfrak{D}^{\mathscr{A}}$ is an $m \times m$-matrix-valued order, with entries $0 < \alpha_{kj} \le 1.$
The vector-valued function $U(t)$ in \eqref{s500} with components $u_j(t), j=1,\dots,m,$ is unknown  and $\Phi$ and $H(t)$ are given vector-valued functions with components $\phi_j, j=1,\dots, m,$ and $h_j(t), j =1,\dots, m,$ respectively, with values in a Banach space $\mathcal{X};$ $F$ is a $m\times m$-matrix-valued operator with entries $f_{ij}, i,j=1,\dots,m,$ acting on  $\X.$

The precise conditions on the matrix $A,$ the matrix-valued operator $F,$ and the vector-valued functions $H(t)$ and  $\Phi$ that ensure the existence and uniqueness of a solution to problems \eqref{s500} and \eqref{s500i} will be specified in Section \ref{sec:2}, along with a review of the classical Duhamel principle and the basic reference vector spaces used in this paper. In Section \ref{sec:3} we present the main results: an abstract fractional analog of Duhamel’s principle for fully coupled systems together with several illustrative examples. Since the system is fully coupled, the associated fractional impulses are expected to be coupled as well, and their form of coupling is also addressed.

 \section{Preliminaries. Classical Duhamel's principle} \label{sec:2}

\setcounter{section}{2} \setcounter{equation}{0} 

For a function $g$ defined on $[0,\infty)$, under some integrability
conditions the {fractional integral of order $\beta$} with
{terminal points} $\tau$ and $t,$ is defined as \cite{SKM93}
\[
_{\tau}J^{\beta}g(t)=\frac{1}{\Gamma(\beta)}\int_\tau^{t}
(t-s)^{\beta -1}g(s)ds, \, t > \tau,
\]
where $\Gamma(\cdot)$ is Euler's gamma-function. Obviously, if
$\beta =n$ then $_{\tau}J^n$ is the $n$-fold integral of $f$ over
the interval $[\tau,t].$ By convention, $_{\tau}J^0 f(t)=f(t),$ i.e.
$_{\tau}J^{0}$ coincides with the identity operator. Operators $_{\tau}J^{\alpha}$ and $_{\tau}J^{\beta}$ are commutative, associative for any $\alpha, \beta \ge 0,$ and the family $\{_{\tau}J^{\beta}\}, \beta \ge 0,$ possesses the semigroup property:  
\begin{equation}
\label{semigroup}
_{\tau}J^{\alpha+\beta} = _{\tau}J^{\alpha} \ _{\tau}J^{\beta}, \quad \forall \ \alpha, \beta \ge 0.
\end{equation}

We denote by
$_\tau D_{+}^{\alpha}$ for $0 < \alpha < 1,$ the fractional
derivative of order $\alpha $ in the sense of Riemann-Liouville,
which is defined as 
$$
_\tau D_{+}^{\alpha}g(t) =
\frac{1}{\Gamma(1-\alpha)}\frac{d}{dt}
\int_{\tau}^{t}\frac{g(s)ds}{(t-s)^{\alpha}} , \, t >
\tau, 
$$
and $_\tau D_{+}^{0}g(t)=g(t), \,\, _\tau D_{+}^{1}g(t)=
g^{\prime}(t)$. The fractional derivative in the sense of Caputo-Djrbashian is defined by
$$
_\tau D_{\ast}^{\alpha}g(t) =
\frac{1}{\Gamma(1-\alpha)}
\int_{\tau}^{t}\frac{g^{\prime}(s)ds}{(t-s)^{\alpha}} , \, t >
\tau. 
$$
 There is a relationship between these two fractional derivatives \cite{GM97}. Namely,
\begin{equation}
\label{relation2} _\tau D_{+}^{\alpha}g(t) =
\,_{\tau}D_{*}^{\alpha}g(t) +
g(\tau)\frac{(t-\tau)^{-\alpha}}{\Gamma(1-\alpha)}, \quad t>\tau.
\end{equation}
If $g(\tau)=0,$ then one obtains the equality $_\tau
D_{+}^{\alpha}g(t) = \,_{\tau}D_{\ast}^{\alpha}g(t).$
 Alternative representations via the fractional integral
 are:
$$\,_{\tau}D_{+}^{\alpha}g(t)=\frac{d}{dt} \,_{\tau}J^{1-\alpha}g(t) ~~~ \mbox{and} ~~~
\,_{\tau}D_{\ast}^{\alpha}g(t)= \,_{\tau}J^{1-\alpha}\frac{d
g(t)}{dt}.$$ We omit from the notation the lower terminal point
$\tau$ if $\tau=0,$ writing simply $D_{+}^{\alpha}, \,
D_{*}^{\alpha}$ or $J^{\alpha}.$ Recall that for the Laplace
transform of $J^{\alpha}, \ D_{+}^{\alpha}g(t)$ and $D_{\ast}^{\alpha}g(t),$ 
the following formulas hold: 
\begin{equation}
\label{LI}
{\mathcal{L}}[J^{\alpha} g](s)=s^{-\alpha} \mathcal{L}[g](s), \quad s>0,
\end{equation}
\begin{equation}
\label{LRL}
{\mathcal{L}}[D_{+}^{\alpha}g](s) = s^{\alpha} {\mathcal{L}}[g](s) -
\left(J^{1-\alpha} g
\right)_{(t=0+)} s^{\alpha-1}, \quad s>0,
\end{equation}
and
\begin{equation}
\label{laplace} {\mathcal{L}}[D_{\ast}^{\alpha}g](s) = s^{\alpha}
{\mathcal{L}}[g](s) - 
g (0+) s^{\alpha -1}, \quad s>0.
\end{equation}
Here ${\mathcal{L}}[g](s)$ denotes the Laplace transform of $g.$

Now let us recall the classical Duhamel principle in the form convenient for us.
 Let \,$B=B(x,\frac{\partial}{\partial t},
 D_x),$\, where
$D_x=(\frac{\partial}{\partial x_1},...,\frac{\partial}{\partial
x_n}),$ be  a linear differential operator containing only spatial derivatives, and with coefficients not depending on
\,$t$ (see \cite{BJS64}).
Consider the Cauchy problem
\begin{equation}
\label{classic1} 
\frac{\partial u}{\partial t}(t,x) + B u(t,x) = h(t,x), \quad t > 0,\,\,x \in R^{n},
\end{equation}
with the homogeneous initial condition
\begin{equation}
\label{classic2} u(0,x) = 0. 
\end{equation}
Let a sufficiently smooth function \,$v(t, \tau, x),\,\,t \geq
\tau,\,\,\tau \geq 0,\,\, x \in R^{n},$\, be for \,$t > \tau$\, a
solution of the homogeneous equation
$$
\frac{\partial v}{\partial t}(t, \tau, x) + B v(t, \tau, x) =
0,
$$
satisfying the following conditions:
$$
v(t, \tau, x)|_{t=\tau} 
 = h(\tau, x).
$$
Then a solution of the Cauchy problem (\ref{classic1}),
(\ref{classic2}) is given by means of the integral
\begin{equation}
\label{integrel} u(t,x) = \int_{0}^{t}v(t, \tau, x)d\tau.
\end{equation}
The formulated statement is known as \textit{Duhamel's principle,}
and the integral in (\ref{integrel}) as \textit{Duhamel's integral.} It is well known that in this case the semigroup property holds, producing for the solution $u(t,x)$ the representation through the infinitesimal impulses:
\[
u(t,x) = \int_0^t e^{-(t-\tau)B} h(\tau, x) d \tau.
\]
Moreover, if the initial condition for equation \eqref{classic1} has the form $u(0)=\vf(x)$ (instead of \eqref{classic2}), then the solution has the representation
\[
u(t,x) = e^{-Bt} \vf(x) + (e^{-B \, t}) \ast h (t,x),
\]
where the symbol $"\ast"$ means the convolution operation. The importance of the latter representation is both terms use the same solution operator $S(t)=e^{-B t}.$

A similar statement (replacing the semigroup structure to the group structure) is valid in the case of the Cauchy problem with
a homogeneous initial conditions for a second order inhomogeneous
partial differential equation
$$
\frac{\partial^2 u}{\partial t^2}(t,x) + Cu(t,x) = h(t,x), \quad t > 0,
\,\,x \in R^{n},
$$
where \,$C=C(x,D_x)$\, is a linear differential operator with coefficients not depending on \,$t$, and containing temporal derivatives of order not higher than 1 
(see e.g.  \cite{BJS64}). The initial conditions for the corresponding homogeneous equation in this case have the form
$$
v(t, \tau, x)|_{t=\tau} = 0,\quad \frac{\partial v}{\partial t}(t, \tau, x)|_{t=\tau}
 = h(\tau, x).
$$

 Further, let $F$ be a linear closed operator defined on a Banach space $\mathcal{X}$ and $h(t)$ is a vector-function with values in $\mathcal{X}.$ For abstract evolution processes Duhamel's principle is formulated as follows.
\begin{theorem} (Duhamel's Principle) 
\label{DP}
The solution of the Cauchy problem
\[
\frac{du(t)}{dt} = Fu(t) + h(t), \quad u(0)=0,
\]
has the representation through Duhamel's integral
\[
u(t)=\int_0^t v(t,\tau) d\tau,
\]
where $v(t,\tau)$ is a solution to the Cauchy problem
\[
\frac{dv(t,\tau)}{dt} = F v(t,\tau), \ t>\tau, \quad v(t,\tau)\Big|_{t=\tau+}=h(\tau).
\]
\end{theorem}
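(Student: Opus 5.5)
The plan is to verify directly that the Duhamel integral $u(t)=\int_0^t v(t,\tau)\,d\tau$ satisfies the equation and the initial condition. Uniqueness then follows from the well-posedness of the homogeneous Cauchy problem for $F$, which we take for granted: $F$ is closed and generates a $C_0$-semigroup $S(t)$, so that $v(t,\tau)=S(t-\tau)h(\tau)$. We also assume $h$ is regular enough, say $h\in C^1([0,T],\X)$ or $h(\tau)\in\mathcal{D}(F)$ with $Fh$ continuous, so that $v$ is a classical solution for each $\tau$.

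First, the initial condition is immediate: at $t=0$ the integral runs over an empty interval, so $u(0)=0$.

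Second, we differentiate under the integral sign with a variable upper limit (Leibniz rule):
\[
\frac{du}{dt}(t)=v(t,\tau)\Big|_{\tau=t}+\int_0^t \frac{\partial v}{\partial t}(t,\tau)\,d\tau
= h(t)+\int_0^t F v(t,\tau)\,d\tau .
\]
The boundary term equals $h(t)$ by the condition $v(\tau+,\tau)=h(\tau)$, taken along the diagonal. For the remaining integral we use closedness of $F$: since $\tau\mapsto Fv(t,\tau)$ is Bochner integrable, $\int_0^t v\,d\tau\in\mathcal{D}(F)$ and
\[
\int_0^t Fv(t,\tau)\,d\tau=F\int_0^t v(t,\tau)\,d\tau=Fu(t).
\]
This gives $u'=Fu+h$.

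The main obstacle is justifying the Leibniz differentiation in the Banach-space setting. It requires continuity of $(t,\tau)\mapsto v(t,\tau)$ up to the diagonal and integrability of $\partial_t v=Fv$, which may be singular near $\tau=t$ when $h(\tau)\notin\mathcal{D}(F)$.

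To handle this, I would avoid differentiating under the integral directly and work with difference quotients of $u(t)=\int_0^t S(t-\tau)h(\tau)\,d\tau$. Substituting $\tau\to t-\tau$ gives $u(t)=\int_0^t S(\tau)h(t-\tau)\,d\tau$, which moves the $t$-derivative onto $h$:
\[
u'(t)=S(t)h(0)+\int_0^t S(\tau)h'(t-\tau)\,d\tau,
\]
valid for $h\in C^1$. Then $Fu(t)$ is computed from the identity $F\int_0^r S(s)x\,ds=S(r)x-x$ together with integration by parts. The result is $Fu=u'-h$, which establishes the representation.
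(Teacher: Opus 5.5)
Your proposal is correct. The paper gives no proof of Theorem~\ref{DP}; it is recalled as the classical principle. The only verification mechanism the paper itself uses appears in its fractional analogues (Theorems~\ref{thm1} and~\ref{RL}):
\begin{itemize}
\item differentiate the Duhamel integral by the Leibniz rule of Lemma~\ref{lemma2};
\item read off the boundary term $v(t,t)$ from the initial condition;
\item pull $F$ out of $\int_0^t Fv(t,\tau)\,d\tau$.
\end{itemize}

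Your first computation is exactly this. It is rigorous under your second hypothesis, $h(\tau)\in\mathcal{D}(F)$ with $Fh$ continuous, because then $\partial_t v=S(t-\tau)Fh(\tau)$ is jointly continuous. Under $h\in C^1$ alone, $v(\cdot,\tau)=S(\cdot-\tau)h(\tau)$ is in general only a mild solution. So your claim that $v$ is then classical for each $\tau$ should be dropped for that case.

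Your second argument is a genuinely different route, the standard semigroup proof. Writing $u(t)=\int_0^t S(\tau)h(t-\tau)\,d\tau$ moves the derivative onto $h$. Integrating by parts gives $u(t)=W(t)h(0)+\int_0^t W(\tau)h'(t-\tau)\,d\tau$ with $W(r)=\int_0^r S(s)\,ds$. Together with $FW(r)x=S(r)x-x$ and closedness of $F$, this yields $u(t)\in\mathcal{D}(F)$ and $Fu=u'-h$.

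What this route buys:
\begin{itemize}
\item rigor under the weaker assumption $h\in C^1$, with no need for $h(\tau)\in\mathcal{D}(F)$;
\item no integrability issue for $FS(t-\tau)h(\tau)$ near the diagonal;
\item uniqueness from the generator property, which is what makes ``the solution'' in the statement meaningful.
\end{itemize}

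What it costs is dependence on the semigroup identity for $F\int_0^r S$. The Leibniz-rule verification needs only that $v(\cdot,\tau)$ solves the homogeneous problem with enough joint regularity. That is why it is the version the paper can transplant to the fractional setting, where the semigroup property is lost.
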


\begin{remark} Theorem \ref{DP} is valid for the system of differential-operator equations as well, if one takes as $U(t)=(u_1(t),\dots,u_m(t)),$ $H(t)=(h_1(t),\dots,h_m(t)).$ Then the solution of the nonhomogeneous system $\frac{dU}{dt}=F U +H(t)$ with a matrix-valued operator $F$ and the initial condition $U(0)=0,$ has the representation
$
U(t) = \int_0^t V(t,\tau) d\tau,
$
where vector-function $V(t,\tau)$ satisfies the corresponding homogeneous system with the initial condition $V(t,\tau+)=H(\tau).$
\end{remark}

\begin{remark} Theorem \ref{DP} is generalized \cite{US06,US07} to the fractional differential-operator equations of order $0<\alpha<1:$ 
\begin{align*}
D_{\ast}^{\alpha} u(t) &= F u(t) + h(t), \ t>0, \quad 
 u(0) = 0.
\end{align*}
The solution in this case has the representation
$
u(t) = \int_0^t v(t,\tau) d\tau,
$
where vector-function $v(t,\tau)$ satisfies the initial-value problem
\begin{align*}
D_{\ast}^{\alpha} v(t,\tau) &= F v(t,\tau), \quad t>0, \quad v(t,\tau+) = D_+^{1-\alpha}h(\tau).
\end{align*}
\end{remark}

\section{The fractional Duhamel principle for fully coupled systems} \label{sec:3}

\setcounter{section}{3} \setcounter{equation}{0} 

This section formulates fractional generalizations of Duhamel’s principle and discusses representative examples. We start with preliminary lemmas.

\begin{lemma} \label{lemma1}
Let $h(t)$ be a continuous differentiable function for all $0 \le t
< T <\infty$. Then the equation $J^{\alpha}u(t)=h(t), \, 0<t<T,$
where $0<\alpha<1,$ has a unique continuous solution given by the
formula
\begin{equation} \label{Abel}
u(t)=D_+^{\alpha}h(t), \, 0<t<T.
\end{equation}
\end{lemma}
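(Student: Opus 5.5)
Since $h$ is $C^1$, we can write the candidate solution explicitly. By the relation \eqref{relation2} with $\tau=0$,
\[
D_+^{\alpha}h(t)=\frac{h(0)\,t^{-\alpha}}{\Gamma(1-\alpha)}+J^{1-\alpha}h'(t),\qquad 0<t<T .
\]
This formula gives continuity of $D_+^\alpha h$ on $(0,T)$, with an integrable singularity $t^{-\alpha}$ at the origin. Indeed, $J^{1-\alpha}$ maps continuous functions on $[0,T)$ into continuous functions, and $t^{-\alpha}\in L^1(0,T)$.

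\emph{Existence.} We verify that $u:=D_+^\alpha h$ satisfies $J^\alpha u=h$, using the semigroup property \eqref{semigroup} and the explicit power computation $J^\alpha t^{-\alpha}/\Gamma(1-\alpha)=1$. This computation is a Beta integral: $\frac{1}{\Gamma(\alpha)\Gamma(1-\alpha)}\int_0^t (t-s)^{\alpha-1}s^{-\alpha}ds=1$. Together with $J^\alpha J^{1-\alpha}h'=J^1h'=h(t)-h(0)$, this gives
\[
J^\alpha u(t)=h(0)+h(t)-h(0)=h(t).
\]
Alternatively, write $u=\frac{d}{dt}J^{1-\alpha}h$ and note $J^\alpha\frac{d}{dt}J^{1-\alpha}h=\frac{d}{dt}J^\alpha J^{1-\alpha}h=\frac{d}{dt}J^1h=h$. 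Interchanging $J^\alpha$ and $d/dt$ requires $J^{1-\alpha}h(0+)=0$, which holds since $h$ is bounded near $0$.

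\emph{Uniqueness.} By linearity it suffices to show that $J^\alpha w=0$ on $(0,T)$ forces $w=0$, for $w$ continuous on $(0,T)$ and integrable near $0$. We apply $J^{1-\alpha}$ and use \eqref{semigroup} to get $J^1w=J^{1-\alpha}J^\alpha w=0$. So $\int_0^t w(s)\,ds=0$ for all $t$, and differentiating, which is legitimate at every point of continuity, gives $w\equiv0$. A Laplace transform argument via \eqref{LI} would also work after extending by zero, but the direct argument avoids growth conditions.

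The main obstacle is the function class. The solution is generally unbounded at $t=0$ unless $h(0)=0$, so ``continuous solution'' must be understood as continuous on $(0,T)$ and locally integrable at $0$. The semigroup identity \eqref{semigroup} must be justified for such $L^1$ functions via Fubini, since the integrand $(t-r)^{\alpha-1}(r-s)^{-\alpha}$ is nonnegative and integrable. I would state this interpretation explicitly and check the Fubini step carefully; the rest is routine.
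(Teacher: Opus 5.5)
Your proof is correct. It is essentially the classical Abel-equation argument that the paper does not write out but cites from Samko--Kilbas--Marichev and Gorenflo--Mainardi: uniqueness by applying $J^{1-\alpha}$ and Fubini to reduce to $J^1 w=0$, and existence via $D_+^\alpha h = h(0)t^{-\alpha}/\Gamma(1-\alpha)+J^{1-\alpha}h'$ together with the Beta integral. Your qualification of the statement is also correct and needed: when $h(0)\neq 0$ the solution is continuous only on $(0,T)$, with an integrable $t^{-\alpha}$ singularity at the origin, and a solution continuous on $[0,T)$ exists only if $h(0)=0$.
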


Lemma \ref{lemma1} is essentially the well-known result on a solution of Abel's integral equation of first kind. See
\cite{GM97,SKM93} for the proof.

\begin{lemma} \label{lemma2}
Suppose $v(t,\tau)$ is a $X$-valued vector-function defined for all $t \ge \tau \ge 0$ and the derivative $\frac{\partial
v(t,\tau)}{\partial t},$ is jointly continuous in the $X$-norm. Let $u(t)=\int_0^t v(t,\tau)d\tau.$ Then
\begin{equation} \label{l2}
\frac{d}{dt}u(t)=  \frac{\partial}{\partial t}v(t,\tau){\Big|_{\tau=t}}+\int_0^t \frac{\partial}{\partial t}v(t,\tau)d\tau.
\end{equation}
\end{lemma}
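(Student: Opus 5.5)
The plan is to compute $u'(t)$ directly from difference quotients and then compare the result with the right-hand side of \eqref{l2} exactly as printed. For $\delta>0$ (one-sided quotients are handled the same way) I will write
\[
\frac{u(t+\delta)-u(t)}{\delta}=\frac1\delta\int_t^{t+\delta}v(t+\delta,\tau)\,d\tau+\int_0^t\frac{v(t+\delta,\tau)-v(t,\tau)}{\delta}\,d\tau .
\]

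For the second integral I will use the mean value inequality in $\X$:
\[
\Big\|\frac{v(t+\delta,\tau)-v(t,\tau)}{\delta}-\frac{\partial v}{\partial t}(t,\tau)\Big\|\le\sup_{0\le\theta\le1}\Big\|\frac{\partial v}{\partial t}(t+\theta\delta,\tau)-\frac{\partial v}{\partial t}(t,\tau)\Big\|.
\]
The right-hand side tends to $0$ uniformly in $\tau\in[0,t]$. This follows from the joint continuity of $\partial_t v$, hence its uniform continuity on the compact set $\{0\le\tau\le s\le t+1\}$. So the second integral converges to $\int_0^t\frac{\partial v}{\partial t}(t,\tau)\,d\tau$.

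For the first integral I will write $v(t+\delta,\tau)=v(\tau,\tau)+\int_\tau^{t+\delta}\partial_s v(s,\tau)\,ds$. Then
\[
\frac1\delta\int_t^{t+\delta}v(t+\delta,\tau)\,d\tau\to v(t,t),
\]
using continuity of the diagonal trace $\tau\mapsto v(\tau,\tau)$ (which I will add as a hypothesis) and boundedness of $\partial_t v$ on compacts. This yields
\[
u'(t)=v(t,t)+\int_0^t\frac{\partial v}{\partial t}(t,\tau)\,d\tau .
\]

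The main obstacle is the final comparison: \eqref{l2} as printed has the boundary term $\frac{\partial}{\partial t}v(t,\tau)\big|_{\tau=t}$ rather than $v(t,t)$. By the computation above, the printed identity holds exactly when
\[
v(t,t)=\frac{\partial v}{\partial t}(t,\tau)\Big|_{\tau=t}\quad\text{for all } t.
\]
This is not implied by the stated hypotheses. For $v\equiv1$ we get $u(t)=t$ and $u'(t)=1$, but the printed right-hand side equals $0$. So the statement, worded as it is, cannot be proved in general.

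I will therefore present the argument as establishing the displayed formula, and record two ways to recover \eqref{l2} literally. The first is to assume $\frac{\partial v}{\partial t}(t,\tau)\big|_{\tau=t}=v(t,t)$ as an additional hypothesis. The second is to read the boundary term as the trace of $v$ on the diagonal; this is how the lemma is used in the Duhamel argument, where $v(t,\tau)\big|_{t=\tau}$ is the impulse. The counterexample $v\equiv1$ will be included to show that some such modification is necessary.
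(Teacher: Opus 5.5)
Your proposal is correct and follows the paper's intended route. The paper gives no argument beyond calling the lemma the vector-valued Leibniz integral rule, and your difference-quotient computation is the standard proof of that rule. Your added hypothesis, continuity of $\tau\mapsto v(\tau,\tau)$, is genuinely needed: the stated hypotheses alone allow $v(t,\tau)=\phi(\tau)$ with $\phi$ having a jump, and then $u$ fails to be differentiable at that point.

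Your diagnosis of the boundary term is also right. The paper itself applies the lemma in \eqref{r1} as $\frac{du_j}{dt}=v_j(t,\tau)\big|_{\tau=t}+\int_0^t\partial_t v_j(t,\tau)\,d\tau$, so the $\frac{\partial}{\partial t}$ on the boundary term in \eqref{l2} is a typo. Your counterexample $v\equiv\mathrm{const}\neq 0$ correctly refutes the literal statement.
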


This lemma is a vector-function version of the well-known Leibniz integral rule for the derivative of integrals dependent on a parameter.

To formulate the next lemma, we introduce some vector-function spaces. Let $C([0,T],\mathcal{X})$ and $AC([0,T],\mathcal{X})$ denote the spaces of continuous and absolutely continuous $\mathcal{X}$-valued vector functions on $[0,T]$, respectively. The H\"older space of order $\lambda \in (0,1)$ is denoted by $C^{\lambda}([0,T],\mathcal{X})$.

\begin{lemma}
\label{volterra1}
Let $0<\alpha_{ij}<1$, $i,j=1,\dots,m$, and let $A = \{a_{ij}\}$ be a matrix with diagonal entries $a_{jj}=1$ that is invertible. Let 
\[
h_1(t),\dots,h_m(t)\in C^\lambda([0,T],\mathcal X),\quad \lambda > \max_{k,j}|\alpha_{jj}-\alpha_{kj}|.
\] 
Then the system of fractional-order Volterra-type integro-differential equations
\begin{equation}
\label{int_eq0}
g_k(t) + \sum_{\substack{j=1\\ j\neq k}}^m a_{kj} J^{\alpha_{jj}-\alpha_{kj}} g_j(t) = \sum_{\substack{j=1\\ j\neq k}}^m a_{kj} J^{\alpha_{jj}-\alpha_{kj}} h_j(t), \quad k=1,\dots,m,
\end{equation}
with initial conditions $g_j(0) = h_j(0)$, $j=1,\dots,m$, has a unique solution 
\[
(g_1(t),\dots,g_m(t)) \in C([0,T], \mathcal X)^m.
\]
\end{lemma}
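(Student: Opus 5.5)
The plan is to reduce the system to a Volterra system of the second kind with weakly singular kernels and then use a Neumann-series (Picard) argument in $C([0,T],\mathcal X)^m$.

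\emph{Normalization.} Put $w_j=g_j-h_j$. Then \eqref{int_eq0} becomes
\[
w_k(t)+\sum_{j\neq k}a_{kj}J^{\alpha_{jj}-\alpha_{kj}}w_j(t)=-h_k(t),\qquad w_j(0)=0 .
\]
The diagonal coefficients equal $1$ and the diagonal order is $0$, so this is $(I+K)W=-H$ with $K=(a_{kj}J^{\beta_{kj}})_{j\neq k}$ and $\beta_{kj}=\alpha_{jj}-\alpha_{kj}\in(-1,1)$. Since $\lambda>|\beta_{kj}|$ and $h_j\in C^\lambda$, each $J^{\beta_{kj}}h_j$ is well defined and continuous. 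For $\beta_{kj}<0$, $J^{\beta_{kj}}$ means $D_+^{|\beta_{kj}|}$, and the continuity follows from the standard mapping $D_+^{\mu}:C^\lambda\to C^{\lambda-\mu}$ for $\mu<\lambda$, up to the usual boundary term. This is the only place the H\"older hypothesis is used.

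\emph{Case of nonnegative orders.} If all $\beta_{kj}\ge 0$, then $K$ is a matrix of Riemann--Liouville integrals. By the semigroup property \eqref{semigroup}, $K^n$ is again a finite matrix of fractional integrals, and its smallest order grows linearly in $n$ whenever $\beta_{\min}>0$. The estimate $\|J^{\beta}\phi\|_{C([0,t])}\le t^\beta\|\phi\|/\Gamma(\beta+1)$ then gives $\|K^n\|\le C^n T^{n\beta_{\min}}/\Gamma(n\beta_{\min}+1)$. Consequently $\sum(-K)^n$ converges in operator norm on $C([0,T],\mathcal X)^m$, which yields existence and uniqueness of $W$, hence of $(g_j)$. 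Off-diagonal terms with $\beta_{kj}=0$ contribute a constant matrix. I would group these with the identity: the system becomes $(A_0+K_+)W=-H$, where $A_0$ collects the zero-order couplings, and I would invert $A_0$ and proceed as before. This is where the invertibility of $A$ enters, at least in the model case where all $\alpha_{kj}$ are equal and $A_0=A$.

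\emph{General case, and the main obstacle.} When some $\beta_{kj}<0$, $K$ contains fractional derivatives and is unbounded on $C$, so the direct series fails. I would first try to absorb the derivatives. Let $\gamma=\max_{k,j}(\alpha_{kj}-\alpha_{jj})^+<\lambda$, apply $J^{\gamma}$ to each equation, and write $\widetilde W=J^{\gamma}W$. Using Lemma \ref{lemma1} to pass back from $J^\gamma w_k=\tilde w_k$ to $w_k=D_+^\gamma\tilde w_k$, the equations take the form
\[
\tilde w_k+\sum_{j\neq k}a_{kj}J^{\beta_{kj}}\tilde w_j=-J^\gamma h_k .
\]
It would be convenient if this reduced to the nonnegative-order case after an appropriate ordering of the unknowns. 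When the system is genuinely mixed, I would instead work with the Laplace symbol
\[
M(s)=\bigl(a_{kj}s^{\alpha_{kj}-\alpha_{jj}}\bigr)_{k,j}.
\]
The aim would be to show that $M(s)$ is invertible for large $s$, with the dominant powers governed by the invertible matrix $A$, and that $M(s)^{-1}$ equals an invertible constant matrix plus entries that are $O(s^{-\epsilon})$. Inverting the transform then gives a convolution kernel integrable near $0$, hence continuity of $W$. Uniqueness follows in the same way from the homogeneous problem, whose Laplace transform vanishes.

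I expect controlling $M(s)^{-1}$, equivalently making the unbounded derivative terms subordinate, to be the hardest step. If it fails in full generality, I would restrict to orders for which $\alpha_{kj}-\alpha_{jj}$ separates as a row term minus a column term. In that case $M(s)=\Lambda_1(s)A\Lambda_2(s)$ with diagonal $\Lambda_i$, and the inversion is immediate from the invertibility of $A$.
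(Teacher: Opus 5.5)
There is a genuine gap, and it is more than technical. The case with some $\beta_{kj}<0$ is never proved, and as stated it cannot be. Your $J^{\gamma}$ step only commutes $J^{\gamma}$ past every term, so the orders $\beta_{kj}$ are unchanged and no derivative is absorbed. The structure you hope for, $M(s)^{-1}=\text{const}+O(s^{-\epsilon})$, also fails. Take $m=2$, $\alpha_{11}=\alpha_{22}=\tfrac12$, $\alpha_{12}=\tfrac14$, $\alpha_{21}=\tfrac34$, so $\beta_{12}=\tfrac14$, $\beta_{21}=-\tfrac14$, and $\lambda=\tfrac12$ is admissible. Let $a_{12}=0$, $a_{21}=1$, $h_1\equiv x_0\neq0$, $h_2\equiv0$. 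The first equation forces $g_1\equiv0$, and the second then gives $g_2=D_+^{1/4}h_1=t^{-1/4}x_0/\Gamma(3/4)$; correspondingly, $M(s)^{-1}$ has the entry $-s^{1/4}$. The unique solution is therefore not continuous, and it violates $g_1(0)=h_1(0)$. These orders are separable in your sense, so your fallback does not help either. In the separable case $M(s)^{-1}$ has entries $(A^{-1})_{kj}s^{r_k-r_j}$, which produce $D_+^{r_k-r_j}h_j$ and hence, in general, $t^{-(r_k-r_j)}$ singularities unless $h_j(0)=0$. Nor does $H(0)=0$ alone suffice, because derivative orders add along chains of couplings. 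Take $m=3$, $A$ upper triangular, $\alpha_{12}=\alpha_{23}=0.9$ and all other $\alpha_{kj}=0.5$, so $\max|\beta_{kj}|=0.4$ and $\lambda=0.45$ is admissible. For $h_1=h_2=0$, $h_3=t^{0.45}x_0$, the solution is $g_1=a_{13}h_3-a_{12}a_{23}D_+^{0.4}D_+^{0.4}h_3$, which behaves like $t^{-0.35}$ when $a_{12}a_{23}\neq0$.

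Independently of signs, you never check the imposed conditions $w_j(0)=0$. In your nonnegative-order case with all $\beta_{kj}>0$ one has $(KW)(0)=0$, so the Neumann-series solution satisfies $W(0)=-H(0)$, i.e.\ $g_j(0)=0$. The side conditions therefore hold only if $H(0)=0$. Grouping the zero-order couplings into $A_0$ also needs $A_0$ invertible, which does not follow from invertibility of $A$: rows $(1,1,1)$, $(1,1,0)$, $(0,1,1)$ with $\beta_{12}=\beta_{21}=0$ and the other $\beta_{kj}>0$ give $\det A=1$ but a singular $A_0$. Do not expect the paper's proof to supply the missing idea. It inverts $I+\mathcal K$ on $C([0,T],\mathcal X)^m$ via $\|\mathcal K^n\|\le C_0^nT^n/n!$, although for $\beta_{kj}<0$ the operator $\mathcal K$ contains $D_+^{|\beta_{kj}|}$ and is unbounded on $C$. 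It also removes the singular terms by invoking $g_j(0)=h_j(0)$ for the very solution being constructed. The paper's own $2\times2$ Example exhibits the failure: its $g_2$ contains $D_+^{\beta}h_1$, and $g_1(0)=-\frac{a_{12}a_{21}}{1-a_{12}a_{21}}h_1(0)\neq h_1(0)$ unless $h_1(0)=0$. What you do prove in the nonnegative-order case (without side conditions, with $A_0$ invertible) is correct. Your $1/\Gamma(n\beta_{\min}+1)$ decay is the right rate, unlike the paper's $T^n/n!$. A correct lemma with negative orders needs extra hypotheses: at least $H(0)=0$, and a H\"older exponent that controls the growth of $M(s)^{-1}$ (the derivative orders accumulated along coupling chains), not just $\max|\beta_{kj}|$.
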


\proof
Let 
$
\beta_{kj} := \alpha_{jj} - \alpha_{kj} \in (-1,1).
$
We rewrite system \eqref{int_eq0} as
\begin{equation}
\label{eq:reform}
g_k(t) = \sum_{j\neq k} a_{kj} J^{\beta_{kj}} \big( h_j(t) - g_j(t) \big), \quad k=1,\dots,m.
\end{equation}
Define the linear operator $\mathcal K: C([0,T],\mathcal X)^m \to C([0,T],\mathcal X)^m$ by
\[
(\mathcal K g)_k(t) := \sum_{j\neq k} a_{kj} J^{\beta_{kj}} g_j(t),
\]
and the forcing term
\[
h^{\ast}_k(t) := \sum_{j\neq k} a_{kj} J^{\beta_{kj}} h_j(t).
\]
Then \eqref{eq:reform} becomes
\[
G(t) = H^{\ast}(t) - \mathcal K G(t) \quad \Longleftrightarrow \quad (I + \mathcal K) G(t) = H^{\ast}(t),
\]
where $G(t)=\big(g_1(t),\dots,g_m(t) \big)$ and $H^{\ast}(t)=\big(h^{\ast}_1(t),\dots,h^{\ast}_m(t) \big).$ In system \eqref{int_eq0}
 for $\beta_{kj}>0$, the operator $J^\beta_{kj}$ is the Riemann-Liouville fractional integral,
and    for $\beta_{kj}<0$ the operator $J^{\beta_{kj}}=D_+^{\beta_{kj}}$ is the fractional derivative in the sence of Riemann-Liouville.
Since $h_j \in C^\lambda([0,T],\mathcal X)$ with $\lambda > |\beta_{kj}|$, standard fractional calculus results \cite{SKM93,Umarov2015} imply
\begin{equation}
\label{est}
J^{\beta_{kj}} h_j \in C([0,T],\mathcal X), \quad \|J^{\beta_{kj}} h_j\|_{C([0,T])} \le C \|h_j\|_{C^\lambda([0,T])},
\end{equation}

It is well known that fractional derivatives in the sense of Riemann--Liouville may produce singularities at $t=0$. In the present setting, however, no singularity arises due to the imposed initial conditions. Indeed, for $\beta_{kj}>0$ one has
$
J^{\beta_{kj}} f(0)=0,
$
and therefore no singular behavior occurs at $t=0$ in this case. 
If, on the other hand, $\beta_{kj}<0$ for some $j$ and $k$, then a singularity generally appears near $t=0$ (see, e.g., \cite{SKM93}). Applying the corresponding representation to $g_j$ and $h_j$, we obtain
\[
\begin{aligned}
D_+^{-\beta_{kj}} g_j(t)
&=
\frac{g_j(0)}{\Gamma(1-\beta_{kj})}\, t^{-\beta_{kj}}
+
\frac{1}{\Gamma(-\beta_{kj})}
\int_0^t (t-s)^{-\beta_{kj}-1}\bigl[g_j(s)-g_j(0)\bigr]\,ds,\\
D_+^{-\beta_{kj}} h_j(t)
&=
\frac{h_j(0)}{\Gamma(1-\beta_{kj})}\, t^{-\beta_{kj}}
+
\frac{1}{\Gamma(-\beta_{kj})}
\int_0^t (t-s)^{-\beta_{kj}-1}\bigl[h_j(s)-h_j(0)\bigr]\,ds .
\end{aligned}
\]
Since the initial conditions satisfy $g_j(0)=h_j(0)$ for all $j=1,\dots,m$,
the singular terms of order $t^{-\beta_{kj}}$ cancel algebraically in
\eqref{int_eq0}.
As a consequence, only the regular terms remain, and the resulting expressions are continuous in a neighborhood of $t=0$.

The operator $\mathcal K$ is obviously of Volterra type if all $\beta_{kj}>0$.
The Volterra structure remains valid also in the presence of terms with $\beta_{kj}<0$. 
Indeed,  after cancellation of singular terms,   for $h\in C^\lambda([0,T])$ with $\lambda>-\beta_{kj},$ the operator
$J^{\beta_{kj}}$  acts only through the integral term
\[
\frac{1}{\Gamma(-\beta_{kj})}
\int_0^t (t-s)^{-\beta_{kj}-1}\bigl[g_j(s)-g_j(0)\bigr]\,ds,
\]
which is an ordinary Volterra integral operator. Since
$g_j(s)-g_j(0)=O(s^\lambda)$ as $s\to0$ and $\lambda>-\beta_{kj}$, the kernel
is locally integrable and the resulting mapping is continuous on
$[0,T]$. Therefore, the operator $\mathcal K$ is of Volterra type also in the presence of terms with $\beta_{kj}<0$.

 Further, standard estimates yield
\begin{equation}
\label{est1}
\|\mathcal K^n\| \le \frac{C_0^n T^n}{n!}, \quad n \ge 1,
\end{equation}
for some constant $C_0>0$ depending on $\|a_{kj}\|$, $\beta_{kj}$, $\lambda$, and $T$.  
This estimate implies that for the spectral radius of the operator $\mathcal{K},$ we have
\[
r(\mathcal K) = \lim_{n\to\infty} \|\mathcal K^n\|^{1/n} = 0,
\]
so $\mathcal K$ is quasi-nilpotent. It follows that the operator $I+\mathcal K$ is invertible with bounded inverse
\[
(I+\mathcal K)^{-1} = \sum_{n=0}^\infty (-\mathcal K)^n,
\]
which converges in the operator norm. Hence, system \eqref{int_eq0} has a unique solution
$
G(t) = (I+\mathcal K)^{-1} H^{\ast}(t).
$
Moreover, utilizing estimate \eqref{est1}, one obtains
\[
\|(I+\mathcal K)^{-1}\|
\le
\sum_{n=0}^\infty \|\mathcal K^n\|
\le
\sum_{n=0}^\infty \frac{(C_0T)^n}{n!}
=
e^{C_0T}.
\]
Now using estimate \eqref{est}, we have
\[
\|H^{\ast}\|_{C([0,T])^m}
\le
C_1\,\|H\|_{C^\lambda([0,T])^m}, \quad H(t)=\big(h_1(t),\dots,h_m(t) \big).
\]
Therefore, the solution satisfies the estimate
\begin{align*}
\|G\|_{C([0,T])^m} &\le \| (I+\mathcal K)^{-1} H^{\ast}(t) \|_{C([0,T])^m} 
\\
&\le \|(I+\mathcal K)^{-1}\| \|H^{\ast}\|_{C([0,T])^m}
\le
C\,\|H\|_{C^\lambda([0,T])^m},
\end{align*}
where the constant $C=C_1e^{C_0T}>0$ depends only on $T$, the coefficients $a_{kj}$, and the
fractional orders $\alpha_{ij}$.
\eproof

\begin{remark}
\begin{enumerate}
\item If $h_j \in AC([0,T],\mathcal X)$ and additionally $h_j(0)=0$, then the above argument still works, and
 the system \eqref{int_eq0} has a unique solution $g \in AC([0,T],\mathcal X)^m.$ 
\item
If $a_{kj}=0$ for $k\neq j,$ then the right hand side of system \eqref{int_eq0} is zero. The system in this case reduces to
\[
J^{1-\alpha_{jj}}g_j(t) =0, \quad j=1,\dots, m,
\]
implying $g_j(t)=0, j=1,\dots,m.$
\item If $\alpha_{kj} =1$ for all $k,j=1,\dots, m,$ then the solution of system \eqref{int_eq0} is represented as $G(t)=A^{-1} \mathcal{H}(t),$ where the vector-function $\mathcal{H}(t)$ has components
\[
\mathcal{H}_k=  \sum_{\substack{j=1\\j \neq k}}^{m} a_{kj} h_j(t), \quad k=1,\dots,m.
\]
\end{enumerate}
\end{remark}

We now proceed to formulate the main results. Consider the system of fully coupled fractional order evolution equations
\begin{equation}
\label{eq20}
A\circ \mathfrak{D}^{\mathcal{A}} U(t) = F U(t) + H(t), \quad t>0,  
\end{equation}
with the initial condition
\begin{equation}
\label{cauchy20}
 U(0)= \Phi = (\vf_1,\dots,\vf_m).
\end{equation}
where $H(t)=\big( h_1(t), \dots, h_m(t) \big) \in C^{\lambda}([0,T], \X)^m.$
We assume that the diagonal entries of the matrix $A = \{a_{ij}\}$ are not zero, that is $a_{jj} \neq 0, j=1,\dots,m.$ If necessary dividing $j$-th equation in the system by $a_{jj},$ we can assume that $a_{jj}=1$ for $j=1,\dots,m.$

The fractional Duhamel principle establishes a connection between the solutions of problem \eqref{eq2}-\eqref{cauchy2} and the Cauchy problem for the
homogeneous equation
\begin{equation}
\label{eq50}
A\circ \mathfrak{D}^{\mathcal{A}} V(t,\tau) = F V(t,\tau), \quad t>\tau,  
\end{equation}
with the nonhomogeneous initial condition at any time instant $t=\tau:$
\begin{equation}
\label{cauchy510}
 V(t, \tau) \Big|_{t=\tau+}=  I \circ \mathfrak{D}_{+,\tau}^{I-\Lambda} \Big(H(\tau) - G(\tau)\Big),
\end{equation}
where $I$ is the identity matrix of size $m,$ $\Lambda=I \circ \mathcal{A},$ and $G(t)=(g_1(t),\dots, g_m(t))$ 
is the solution to the system \eqref{int_eq0}.

\begin{remark} 
In the case of a single fractional evolution equation, the instantaneous fractional impulses $D_+^{1-\alpha}h(\tau)$ in the initial conditions of the homogeneous equation (see Eq. \eqref{fr_impulse}) carry the memory of the past. For fully coupled fractional dynamical systems, however, as seen from \eqref{cauchy51}, the instantaneous impulses depend not only on the past of each individual component but also on the entire history of all coupled quantities.
\end{remark}

\begin{theorem} \label{thm1} Suppose that \,$V(t,\tau)$ 
is a solution of the Cauchy problem \eqref{eq50},\eqref{cauchy510}.
Then Duhamel's integral
\begin{equation}
\label{solution2} 
U(t) = \int_{0}^{t}V(t,\tau)d\tau, 
\end{equation}
solves Cauchy problem \eqref{eq20}, \eqref{cauchy20} with  $\Phi=0$.
\end{theorem}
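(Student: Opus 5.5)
We need to show $U(t)=\int_0^t V(t,\tau)\,d\tau$ satisfies $A\circ\mathfrak{D}^{\mathcal A}U=FU+H$ with $U(0)=0$. We take the derivatives in \eqref{CapDer} to be Caputo--Djrbashian, $D_\ast^{\alpha_{kj}}=J^{1-\alpha_{kj}}\frac{d}{dt}$. The plan is to compute each entry $a_{kj}D_\ast^{\alpha_{kj}}u_j(t)$ by differentiating the Duhamel integral with Lemma \ref{lemma2}, and then to separate the contribution of the homogeneous equation \eqref{eq50} from the boundary terms produced by the impulses \eqref{cauchy510}. The condition $U(0)=0$ is immediate, since the integral runs over an interval of length zero.

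\textbf{Step 1: differentiate the Duhamel integral.} By Lemma \ref{lemma2},
\[
u_j'(t)=v_j(t,t)+\int_0^t \partial_t v_j(t,\tau)\,d\tau .
\]
Applying $J^{1-\alpha_{kj}}$ and using Fubini (all kernels are nonnegative and locally integrable) gives
\[
D_\ast^{\alpha_{kj}}u_j(t)=J^{1-\alpha_{kj}}\big[v_j(\cdot,\cdot)\big](t)+\int_0^t {}_\tau D_\ast^{\alpha_{kj}}v_j(t,\tau)\,d\tau ,
\]
where ${}_\tau D_\ast$ has lower terminal point $\tau$. Multiplying by $a_{kj}$, summing over $j$, and invoking \eqref{eq50} for each $\tau$, the integral terms combine into $\int_0^t (FV(t,\tau))_k\,d\tau=(FU(t))_k$. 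Here we assume $F$ is closed, so it commutes with the Bochner integral. The problem is therefore reduced to the identity
\[
\sum_{j=1}^m a_{kj}\,J^{1-\alpha_{kj}}\big[v_j(\tau+,\tau)\big](t)=h_k(t),\qquad k=1,\dots,m. \tag{$\ast$}
\]

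\textbf{Step 2: verify $(\ast)$ using the impulses.} By \eqref{cauchy510}, $v_j(\tau+,\tau)=D_+^{1-\alpha_{jj}}(h_j-g_j)(\tau)$. By the semigroup property \eqref{semigroup} together with Lemma \ref{lemma1}, for $j=k$ (where $a_{kk}=1$) we get $J^{1-\alpha_{kk}}D_+^{1-\alpha_{kk}}(h_k-g_k)=h_k-g_k$. This requires $J^{\alpha_{kk}}(h_k-g_k)$ to vanish at $0$, which holds because of the continuity/Hölder assumptions. For $j\ne k$ we get
\[
J^{1-\alpha_{kj}}D_+^{1-\alpha_{jj}}(h_j-g_j)=J^{\alpha_{jj}-\alpha_{kj}}(h_j-g_j),
\]
interpreted as a Riemann--Liouville derivative when the exponent is negative. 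Hence the left side of $(\ast)$ equals
\[
h_k-g_k+\sum_{j\neq k}a_{kj}J^{\alpha_{jj}-\alpha_{kj}}(h_j-g_j),
\]
and this equals $h_k$ exactly because $G$ solves \eqref{int_eq0}. This is precisely why the correction $G$ from Lemma \ref{volterra1} is built into the impulses.

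\textbf{Main obstacle.} The delicate step is justifying the composition rules $J^{1-\alpha_{kj}}D_+^{1-\alpha_{jj}}=J^{\alpha_{jj}-\alpha_{kj}}$ when $\alpha_{jj}>\alpha_{kj}$ fails, i.e. when a negative-order (derivative) operator appears. There the RL derivative may produce singular terms of the form $c\,t^{-\beta}$ involving $(h_j-g_j)(0)$. I would handle this as in the proof of Lemma \ref{volterra1}: the initial condition $g_j(0)=h_j(0)$ kills the singular terms, and the Hölder regularity $\lambda>|\alpha_{jj}-\alpha_{kj}|$ makes the remaining Volterra integrals continuous. This also justifies Lemma \ref{lemma2} and the Fubini interchange at $\tau=t$. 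If the derivatives in \eqref{CapDer} are instead Riemann--Liouville, the same computation applies: the commutation is done with $\frac{d}{dt}J^{1-\alpha}$, and the boundary term is again $(\ast)$.
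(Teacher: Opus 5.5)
Your proposal is correct and follows essentially the same route as the paper. Both use the Leibniz rule (Lemma \ref{lemma2}), then apply $J^{1-\alpha_{kj}}$ to separate the part governed by the homogeneous equation. The boundary terms are handled by the composition $J^{1-\alpha_{kj}}D_+^{1-\alpha_{jj}}=J^{\alpha_{jj}-\alpha_{kj}}$ and then cancelled using the Volterra system \eqref{int_eq0}. Your explicit Fubini step, which yields the lower terminal point $\tau$ in ${}_\tau D_\ast^{\alpha_{kj}}$, is a more precise justification than the paper's appeal to the semigroup property.
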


\proof 
The component-wise  form of Cauchy problem \eqref{eq20}, \eqref{cauchy20} with the homogeneous initial condition is written as
\begin{equation}
\label{eq2}
 \sum_{j=1}^m a_{ij} D_{\ast}^{\alpha_{ij}}u_{j}(t) = \sum_{j=1}^{m}f_{ij} u_{j}(t) + h_i(t), \quad t>0, \ i=1,\dots,m,
\end{equation}

\begin{equation}
\label{cauchy2}
 u_{i}(0)= 0, \quad i = 1, \dots , m,
\end{equation}
while the component-wise form of  the Cauchy problem \eqref{eq50},\eqref{cauchy510} as

\begin{equation}
\label{eq5}
 \sum_{j=1}^m a_{ij} \,_{\tau}D_{\ast,t}^{\alpha_{ij}}v_{j}(t,\tau) = \sum_{j=1}^{m}f_{ij} v_{j}(t,\tau), \quad t>\tau, \ i=1,\dots,m,
\end{equation}

\begin{equation}
\label{cauchy51}
 v_{i}(t, \tau) \Big|_{t=\tau+}=  D_{+,\tau}^{1-\alpha_{ii}} \Big(h_i(\tau) - g_i(\tau)\Big),
 \quad i = 1, \dots , m.
\end{equation}
Let $v_j(t,\tau), j=1,\dots,m,$ as functions of the variable $t,$ be the solution to Cauchy problem (\ref{eq5})-(\ref{cauchy51}) for any fixed $\tau.$ We
verify that the functions $u_j(t)=\int_{0}^t v_j(t,\tau)d\tau$\, satisfy equation
(\ref{eq2}), and conditions \eqref{cauchy2}. Indeed, by the lemma \ref{lemma2},  for $i=1,\dots,m,$ we have
\begin{align}
D u_j(t) &= \frac{du_j(t)}{dt} = v_j(t,\tau)\Big|_{\tau=t} + \int_0^t \frac{\partial v_j(t,\tau)}{\partial t} d\tau 
\notag
\\
&= D_{+,t}^{1-\alpha_{jj}} h_j(t) - D_+^{1-\alpha_{jj}}g_j(t)
+ \int_0^t \frac{\partial v_j(t,\tau)}{\partial t} d\tau.
\label{r1}
\end{align}
Applying to both sides of \eqref{r1} the fractional integration operator $J^{1-\alpha_{jj}}$ and using Lemma \ref{lemma1}, we obtain
\[ 
D_{\ast}^{\alpha_{ii}}u_j(t) = h_j(t) -  g_j(t)
+ J^{1-\alpha_{ii}} \int_0^t  \frac{\partial v_j(t,\tau)}{\partial t} d\tau
\]
Now, utilizing the semigroup property \eqref{semigroup} of fractional integration operators in the last term of the right of the latter, we have
\begin{equation}
\label{r2}
D_{\ast}^{\alpha_{ii}}u_i(t) = h_i(t) -  g_i(t)
+  \int_0^t \Big( D_{\ast,t}^{\alpha_{ii}} v_i(t,\tau) \Big)  d\tau
\end{equation}
Here, the subscript $t$ in the fractional derivative $D_{\ast,t}^{\alpha_{ii}} $ indicates that the operator  acts with respect to the variable $t.$
Similarly, applying to both sides of \eqref{r1} the operator $a_{ij} \ J^{1-\alpha_{ij}}, \ j \neq i$, and taking into account the equality
\[
J^{1-\alpha_{ij}}D_+^{1-\alpha_{jj}} h_j(t)= J^{\alpha_{jj} - \alpha_{ij}}  J^{1-\alpha_{jj}} D_+^{1-\alpha_{jj}} h_j(t) = J^{\alpha_{jj}-\alpha_{ij}} h_i(t),
\]
we obtain
\begin{align}
a_{ij} D_{\ast}^{\alpha_{ij}}  u_j(t) &= {a_{ij}} J^{\alpha_{jj}-\alpha_{ij}} 
h_j(t) - a_{ij} J^{\alpha_{jj}-\alpha_{ij}}  g_j(t)
\notag
\\
&+   \int_0^t \Big(a_{ij} D_{\ast,t}^{\alpha_{ij}} v_j(t,\tau) \Big) d\tau. \label{r3}
\end{align}
Note that if $\alpha_{jj}< \alpha_{ij}$ for some $i$ and $j,$ then the operator $J^{\alpha_{jj}-\alpha_{ij}}$ in the first and second terms on the right hand side of \eqref{r3} becomes $D_{+}^{\alpha_{ij}-\alpha_{jj}}.$ The possible singularities that emerge near $t=0$ in this case cancel due to the condition $g_j(0)=h_j(0)$ required for $g_j(t)$ in system \eqref{int_eq0}. Equations \eqref{r2} and \eqref{r3} imply that
\begin{align}
\sum_{j=1}^m a_{ij} D_{\ast}^{\alpha_{ij}} u_j(t) &= h_i(t) + \sum_{\substack{j=1 \\ j \neq i}}^{m} {a_{ij}}   J^{\alpha_{jj}-\alpha_{ij}}h_j(t) - \sum_{j=1}^m a_{ij} J^{\alpha_{jj}-\alpha_{ij}}  g_j(t)
\notag
\\
&+ \int_0^t \left( \sum_{j=1}^m a_{ij} D_{\ast,t}^{\alpha_{ij}} v_j(t,\tau) \right) d\tau.
\label{r4}
\end{align}
Since $g_j(t), j=1,\dots,m,$ satisfy \eqref{int_eq0}, the second and third terms on the right hand side of \eqref{r4} cancel. Moreover, due to equation \eqref{eq5} the last term on the right hand side of \eqref{r4} takes the form
\begin{align*}
 \int_0^t \left( \sum_{j=1}^m a_{ij} D_{\ast,t}^{\alpha_{ij}} v_j(t,\tau) \right) d\tau &= \int_0^t \left( \sum_{j=1}^m f_{ij} v_j(t,\tau) \right) d\tau 
 \\
 &= \sum_{j=1}^m f_{ij} \int_0^t v_j(t,\tau)d\tau 
 \\
 &= \sum_{j=1}^m f_{ij} u_j(t).
 \end{align*}
Consequently, the functions $u_j(t), j=1,\dots,m,$ defined by the Duhamel integrals \eqref{solution2} satisfy system \eqref{eq2}.
The fact that these functions also satisfy initial conditions \eqref{cauchy2} immediately follows from the definition of $U(t)$ by \eqref{solution2}.
\eproof

If the vector-valued function \,$H(t)$\, in system \eqref{eq20} satisfies the additional condition
\,$H(0) = 0$\, then condition (\ref{cauchy510}), in accordance with
the relationship (\ref{relation2}), can be replaced by
\begin{equation}
\label{Cc}
\frac{\partial V}{\partial t}(t,\tau)|_{t=\tau} =
I \circ \mathfrak{D}_{*}^{I-I\circ \mathcal{A}} \Big( H(\tau)-G(t) \Big).
\end{equation}
As a consequence the formulation of the fractional
Duhamel principle takes the form:

\begin{theorem} \label{thm11} Let the vector-valued function $H(t)\in \AC^m$ in system \eqref{eq20} satisfy the condition $H(0)=0.$ Suppose that \,$V(t,\tau)$ 
is a solution of the Cauchy problem \eqref{eq50}-\eqref{Cc}.
Then Duhamel's integral
\[
U(t) = \int_{0}^{t}V(t,\tau)d\tau,
\]
solves the Cauchy problem \eqref{eq20}, \eqref{cauchy20} with $\Phi=0$.
\end{theorem}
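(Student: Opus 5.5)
The plan is to reduce Theorem \ref{thm11} to Theorem \ref{thm1}. The only difference between the two statements is the initial condition imposed on the homogeneous problem \eqref{eq50}. We therefore show that, under $H\in\AC^m$ and $H(0)=0$, condition \eqref{Cc} is equivalent to \eqref{cauchy510}. Once this is done, the proof of Theorem \ref{thm1} can be repeated verbatim. We work component-wise, as in the proof of Theorem \ref{thm1}. For fixed $\tau$, the $i$-th component of the impulse is
\[
D_{+,\tau}^{1-\alpha_{ii}}\bigl(h_i-g_i\bigr)(\tau),
\]
and we must compare it with $D_{\ast}^{1-\alpha_{ii}}\bigl(h_i-g_i\bigr)(\tau)$.

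\textbf{Step 1: the $g_i$ vanish at the origin.} By Lemma \ref{volterra1}, the solution $G$ of \eqref{int_eq0} satisfies $g_j(0)=h_j(0)$. Hence $H(0)=0$ forces $G(0)=0$, and so the difference $w_i:=h_i-g_i$ satisfies $w_i(0)=0$. By part (1) of the Remark following Lemma \ref{volterra1}, the hypotheses $H\in\AC^m$ and $H(0)=0$ give $G\in\AC^m$. Therefore each $w_i$ is absolutely continuous, which is exactly what is needed for the Caputo derivative $D_\ast^{1-\alpha_{ii}}w_i$ to be defined.

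\textbf{Step 2: identify the two impulses.} We apply relation \eqref{relation2} with $\alpha=1-\alpha_{ii}$ and base point $0$:
\[
D_+^{1-\alpha_{ii}}w_i(\tau)=D_\ast^{1-\alpha_{ii}}w_i(\tau)+w_i(0)\frac{\tau^{\alpha_{ii}-1}}{\Gamma(\alpha_{ii})}.
\]
The last term vanishes by Step 1, so the Riemann--Liouville and Caputo impulses coincide for every $\tau>0$. In vector form this says that $I\circ\mathfrak D_+^{I-\Lambda}(H-G)=I\circ\mathfrak D_\ast^{I-\Lambda}(H-G)$, which shows that \eqref{Cc} prescribes the same Cauchy data as \eqref{cauchy510}. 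We read the argument $G(t)$ written in \eqref{Cc} as $G(\tau)$, consistently with \eqref{cauchy510}. A solution $V$ of \eqref{eq50}--\eqref{Cc} is then a solution of \eqref{eq50}--\eqref{cauchy510}.

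\textbf{Step 3: conclude.} Theorem \ref{thm1} now applies directly. The Duhamel integral $U(t)=\int_0^tV(t,\tau)\,d\tau$ satisfies \eqref{eq2}, and it vanishes at $t=0$ by construction.

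The main obstacle I anticipate lies in the use of Theorem \ref{thm1}. That theorem assumes $H\in C^\lambda$ with $\lambda$ larger than the gaps $|\alpha_{jj}-\alpha_{kj}|$, whereas Theorem \ref{thm11} only assumes absolute continuity. To bridge this gap, I would check that each step of the proof of Theorem \ref{thm1} survives under the weaker hypothesis. These steps are Lemma \ref{lemma1} applied to $w_i$, the Leibniz rule of Lemma \ref{lemma2}, and the cancellation of the singular terms in \eqref{r3}.
\begin{itemize}
\item In Lemma \ref{lemma1}, the Abel inversion $J^{1-\alpha}D_+^{1-\alpha}w=w$ holds for absolutely continuous $w$ with $w(0)=0$.
\item In \eqref{r3}, the possible singular terms $t^{-\beta}$ are proportional to $g_j(0)$ and $h_j(0)$, which are now both zero. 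The terms with negative $\beta_{kj}$ therefore reduce to $D_+^{-\beta}$ of AC functions vanishing at $0$, which lie in $L^1$. This is enough for the identities to hold almost everywhere.
\end{itemize}
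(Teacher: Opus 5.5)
Your proposal is correct and follows the paper's own route. The paper's only justification for Theorem \ref{thm11} is the remark preceding it: when $H(0)=0$, relation \eqref{relation2} lets the Riemann--Liouville impulse in \eqref{cauchy510} be replaced by the Caputo impulse in \eqref{Cc}, so Theorem \ref{thm1} applies. Your extra checks go beyond what the paper verifies without changing that route: that $G\in AC$ with $G(0)=0$, and the $AC$ versus $C^\lambda$ regularity gap. Your implicit reading of \eqref{Cc} is also the intended one, namely a condition on $V(\tau+,\tau)$ involving $G(\tau)$, rather than on $\partial V/\partial t$ as printed.
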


Similarly, we can formulate the fractional Duhamels principle for fully coupled systems given through the Riemann-Liuoville fractional derivatives.
Consider the system represented in the component-wise form

\begin{equation}
\label{eq9}
 \sum_{j=1}^m a_{ij} D_{+}^{\alpha_{ij}}u_{j}(t) = \sum_{j=1}^{m}f_{ij} u_{j}(t) + h_i(t), \quad t>0, \ i=1,\dots,m,
\end{equation}
\begin{equation}
\label{cauchy9}
 \Big(J^{1-\alpha_{ii}}u_{i} \Big)(0)= 0, \quad i = 1, \dots , m,
\end{equation}
where $D_+^{\alpha}$ is the fractional derivative in the sense of Riemann-Liouville, and $H(t) \in C([0,T]; \X).$
Again, without loss of generality we can assume 
that $a_{jj}=1$ for $j=1,\dots,m.$

In the Riemann-Liouville case the fractional Duhamel principle establishes a connection between the solutions of  problem \eqref{eq9}-\eqref{cauchy9} and the Cauchy problem for the
homogeneous equation
\begin{equation}
\label{eq91}
 \sum_{j=1}^m a_{ij} \,_{\tau}D_{+,t}^{\alpha_{ij}}v_{j}(t,\tau) = \sum_{j=1}^{m}f_{ij} v_{j}(t,\tau), \quad t>\tau, \ i=1,\dots,m,
\end{equation}
with non-homogeneous initial conditions at any time instant $t=\tau:$
\begin{equation}
\label{cauchy91}
 \Big( J^{1-\alpha_{ii}} _{i}  v_i \Big) (t, \tau) \Big|_{t=\tau+}=  h_i(\tau) - g_i(\tau),
 \quad i = 1, \dots , m,
\end{equation}
where $q_i(t), i=1,\dots,m,$ is the solution to the system \eqref{int_eq0}.

\begin{theorem} \label{RL} Suppose that \,$V(t,\tau)$ 
is a solution of the Cauchy problem \eqref{eq91}-\eqref{cauchy91}.
Then Duhamel's integral
\begin{equation}
\label{solution91} 
U(t) = \int_{0}^{t}V(t,\tau)d\tau, 
\end{equation}
solves the Cauchy problem \eqref{eq9}, \eqref{cauchy9}.
\end{theorem}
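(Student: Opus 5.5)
Following the proof of Theorem \ref{thm1}, I work component-wise and verify directly that $u_j(t)=\int_0^t v_j(t,\tau)\,d\tau$ satisfies \eqref{eq9}--\eqref{cauchy9}. The structural difference is that in the Riemann--Liouville setting the impulse is imposed on $J^{1-\alpha_{ii}}v_i$, not on $v_i$. So I differentiate the fractional integral of $u_j$ rather than $u_j$ itself.

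\textbf{Step 1: interchange.} For each $j$, Fubini's theorem gives
\[
J^{1-\alpha_{jj}}u_j(t)=\int_0^t \,{}_{\tau}J^{1-\alpha_{jj}}_t v_j(t,\tau)\,d\tau .
\]
Indeed, integrating $(t-s)^{-\alpha_{jj}}v_j(s,\tau)$ over $0\le\tau\le s\le t$ in either order yields the same result.

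\textbf{Step 2: diagonal terms.} Apply Lemma \ref{lemma2} to $w_j(t,\tau)={}_{\tau}J^{1-\alpha_{jj}}v_j(t,\tau)$. Using the initial condition \eqref{cauchy91}, this gives
\[
D_+^{\alpha_{jj}}u_j(t)=h_j(t)-g_j(t)+\int_0^t {}_{\tau}D_{+,t}^{\alpha_{jj}}v_j(t,\tau)\,d\tau .
\]
This is the analogue of \eqref{r2}, and it is obtained without any appeal to Lemma \ref{lemma1}. Taking $t\to0$ in Step 1 also gives \eqref{cauchy9} immediately, provided $w_j$ is bounded near the diagonal.

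\textbf{Step 3: off-diagonal terms.} For $j\ne i$, write $1-\alpha_{ij}=(\alpha_{jj}-\alpha_{ij})+(1-\alpha_{jj})$ and use the semigroup property \eqref{semigroup}:
\[
J^{1-\alpha_{ij}}u_j=J^{\alpha_{jj}-\alpha_{ij}}\int_0^t w_j(t,\tau)\,d\tau .
\]
Differentiating, as in \eqref{r3}, should give
\[
a_{ij}D_+^{\alpha_{ij}}u_j=a_{ij}J^{\alpha_{jj}-\alpha_{ij}}(h_j-g_j)+\int_0^t a_{ij}\,{}_{\tau}D^{\alpha_{ij}}_{+,t}v_j\,d\tau .
\]
To justify this, I would move $J^{\alpha_{jj}-\alpha_{ij}}$ inside the $\tau$-integral, again by Fubini, for the pieces starting at $\tau$. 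I then expect the boundary term produced by differentiation to be exactly $J^{\alpha_{jj}-\alpha_{ij}}$ applied to the trace $h_j-g_j$.

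\textbf{Step 4: summation.} Sum over $j$ as in \eqref{r4}. Because $g$ solves \eqref{int_eq0}, the terms $\sum_{j\ne i}a_{ij}J^{\alpha_{jj}-\alpha_{ij}}(h_j-g_j)$ cancel against $-g_i$, leaving $h_i$. Finally, \eqref{eq91} turns the remaining integral into $\sum_j f_{ij}u_j$, with $f_{ij}$ pulled outside the integral since it is closed.

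\textbf{Main obstacle.} The hard part is Step 3 when $\alpha_{ij}>\alpha_{jj}$. There $J^{\alpha_{jj}-\alpha_{ij}}$ is a Riemann--Liouville derivative, and interchanging it with the $\tau$-integral requires care with the singular kernel near $\tau=t$ and $t=0$. My first approach would be to avoid negative orders altogether. I would treat $J^{1-\alpha_{ij}}u_j$ directly as $\int_0^t {}_{\tau}J^{1-\alpha_{ij}}v_j\,d\tau$ and differentiate using Lemma \ref{lemma2}. The boundary term is then $\lim_{t\to\tau+}{}_{\tau}J^{1-\alpha_{ij}}v_j(t,\tau)$, and I would relate it to the impulse through the singular behaviour
\[
v_j(t,\tau)\sim (h_j-g_j)(\tau)\,(t-\tau)^{\alpha_{jj}-1}/\Gamma(\alpha_{jj}).
\]
The resulting singular terms are controlled by the cancellation from $g_j(0)=h_j(0)$, as argued in Lemma \ref{volterra1}. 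I expect this step to require assuming enough regularity of $V$, jointly in $(t,\tau)$, for Lemma \ref{lemma2} and Fubini to apply.
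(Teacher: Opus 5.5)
Your plan follows the paper's proof step for step: Fubini for $J^{1-\alpha_{jj}}u_j$, Lemma \ref{lemma2} on the diagonal, the splitting $J^{1-\alpha_{ij}}=J^{\alpha_{jj}-\alpha_{ij}}J^{1-\alpha_{jj}}$ off the diagonal, and cancellation through \eqref{int_eq0}. It fails at the same point as the paper. The off-diagonal identity in your Step 3 is exactly the paper's \eqref{v3}, which the paper asserts without proof, and it is false whenever $\beta=\alpha_{jj}-\alpha_{ij}\neq 0$.

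Take $\beta>0$ and put $w_j={}_{\tau}J^{1-\alpha_{jj}}_t v_j$. Pulling $J^{\beta}$ out and differentiating is allowed because $J^{1-\alpha_{jj}}u_j(0)=0$, and it does produce $J^{\beta}(h_j-g_j)$. What remains, however, is $\int_0^t {}_{\tau}J^{\beta}\,\partial_t w_j\,d\tau$, and for Riemann--Liouville derivatives
\[
{}_{\tau}J^{\beta}\,{}_{\tau}D^{\alpha_{jj}}_{+,t}v_j(t,\tau)={}_{\tau}D^{\alpha_{ij}}_{+,t}v_j(t,\tau)-(h_j-g_j)(\tau)\,\frac{(t-\tau)^{\beta-1}}{\Gamma(\beta)}.
\]
The last term integrates in $\tau$ to exactly $-J^{\beta}(h_j-g_j)(t)$. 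So in fact $D_+^{\alpha_{ij}}u_j=\int_0^t {}_{\tau}D^{\alpha_{ij}}_{+,t}v_j\,d\tau$, with no boundary term.

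Your fallback route shows the same thing. The boundary term from Lemma \ref{lemma2} is the diagonal trace of ${}_{\tau}J^{1-\alpha_{ij}}v_j$. By your own asymptotics it behaves like $(h_j-g_j)(\tau)(t-\tau)^{\beta}/\Gamma(1+\beta)\to 0$. A pointwise trace at $\tau=t$ can never reproduce the nonlocal term $J^{\beta}(h_j-g_j)(t)$. For $\beta<0$ that trace is infinite, and ${}_{\tau}D^{\alpha_{ij}}_{+,t}v_j\sim(t-\tau)^{\beta-1}$ is not integrable in $\tau$, so the integrals in Step 4 do not even exist as written.

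Consequently, when $\alpha_{ij}<\alpha_{jj}$ for all $i\neq j$, Step 4 actually gives $\sum_j a_{ij}D_+^{\alpha_{ij}}u_j=h_i-g_i+\sum_j f_{ij}u_j$. The correction $g_i$ is never compensated, so the statement holds only if $G\equiv 0$.

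Here is a concrete check. Take $m=2$, $\mathcal{X}=\mathbb{R}$, $F=0$, $a_{21}=0\neq a_{12}$, $\alpha_{12}<\alpha_{22}$, $h_1=0$, $h_2(t)=t$, and set $\beta=\alpha_{22}-\alpha_{12}$.
\begin{itemize}
\item Then $g_2=0$ and $g_1=a_{12}J^{\beta}h_2$.
\item Solving \eqref{eq91}--\eqref{cauchy91} explicitly gives $\int_0^t v_1\,d\tau=-2a_{12}J^{\alpha_{11}+\beta}h_2$.
\item The solution of \eqref{eq9}--\eqref{cauchy9} is $u_1=-a_{12}J^{\alpha_{11}+\beta}h_2$.
\end{itemize}
So the Duhamel integral double-counts the coupling.

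The analogy with \eqref{r3} breaks for a structural reason. In the Caputo case you differentiate first, and $J^{1-\alpha_{ij}}$ then smears the trace $v_j(t,t)$ into a nonlocal term; the coupled impulse is what compensates for that term. Here the fractional integral acts before differentiation, so the off-diagonal trace vanishes. With the uncoupled impulse ${}_{\tau}J^{1-\alpha_{ii}}v_i|_{t=\tau+}=h_i(\tau)$, your Steps 1--4, with Step 3 corrected as above, do go through in the case $\alpha_{ij}<\alpha_{jj}$.
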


\proof
Let $v_j(t,\tau), j=1,\dots,m,$ as functions of the variable $t,$ be the solution to
Cauchy problem (\ref{eq91})-(\ref{cauchy91}) for any fixed $\tau.$ We
verify that the functions 
\begin{equation}
\label{v}
u_j(t)=\int_{0}^t v_j(t,\tau)d\tau
\end{equation}
satisfy equation (\ref{eq9}), and conditions \eqref{cauchy9}. Indeed, applying the fractional integration operator $J^{1-\alpha_{jj}}$ 
to \eqref{v} and using the semigroup property of fractional integration operators, we have
\begin{equation*}
J^{1-\alpha_{jj}} u_j(t)=\int_{0}^t  J_t^{1-\alpha_{ii}}v_j(t,\tau)d\tau
\end{equation*}
Now differentiating both sides of the latter and using  Lemma \ref{lemma2} and the definition of the fractional derivative in the sense of Riemann-Liouville,  for $i=1,\dots,m,$ we have
\begin{align}
D_+^{\alpha_{jj}} u_j(t) &= \Big(J^{1-\alpha_{jj}} v_j \Big) (t,\tau)\Big|_{\tau=t} + \int_0^t \frac{\partial}{\partial t} J_t^{1-\alpha_{jj}}v_j(t,\tau) d\tau 
\notag
\\
&= h_j(t) - g_j(t)
+ \int_0^t D_{+,t}^{\alpha_{jj}} v_j(t,\tau)d\tau.
\label{r11}
\end{align}
The subscript $t$ in the fractional derivative $D_{+,t}^{\alpha_{ii}} $ in the integrand of \eqref{r11} indicates that the operator  acts with respect to the variable $t.$
Similarly, applying to both sides of \eqref{v} the operator $a_{ij} \ J^{1-\alpha_{ij}}, \ j \neq i$, and differentiating, we obtain
\begin{align}
a_{ij} D_{+}^{\alpha_{ij}}  u_j(t) &= {a_{ij}} J^{\alpha_{jj}-\alpha_{ij}} 
h_j(t) - a_{ij} J^{\alpha_{jj}-\alpha_{ij}}  g_j(t)
\notag
\\
&+   \int_0^t \Big(a_{ij} D_{+,t}^{\alpha_{ij}} v_j(t,\tau) \Big) d\tau. \label{v3}
\end{align}
Here we used the equality $J^{1-\alpha_{ij}} = J^{\alpha_{jj}-\alpha_{ij}}J^{1-\alpha_{jj}},$ valid due to the semigroup property \eqref{semigroup} of the family of fractional integration operators. 
Equations \eqref{r11} and \eqref{v3} imply that
\begin{align}
\sum_{j=1}^m a_{ij} D_{+}^{\alpha_{ij}} u_j(t) &= h_i(t) + \sum_{\substack{j=1 \\ j \neq i}}^{m} {a_{ij}}   J^{\alpha_{jj}-\alpha_{ij}}h_j(t) - \sum_{j=1}^m a_{ij} J^{\alpha_{jj}-\alpha_{ij}}  g_j(t)
\notag
\\
&+ \int_0^t \left( \sum_{j=1}^m a_{ij} D_{+,t}^{\alpha_{ij}} v_j(t,\tau) \right) d\tau.
\label{v4}
\end{align}
Since $g_j(t), j=1,\dots,m,$ satisfy \eqref{int_eq0}, the second and third terms on the right hand side of \eqref{v4} cancel. Moreover, due to equation \eqref{eq9} the last term on the right of \eqref{v4} takes the form
\begin{align*}
 \int_0^t \left( \sum_{j=1}^m a_{ij} D_{+,t}^{\alpha_{ij}} v_j(t,\tau) \right) d\tau &= \int_0^t \left( \sum_{j=1}^m f_{ij} v_j(t,\tau) \right) d\tau 
 = \sum_{j=1}^m f_{ij} u_j(t).
 \end{align*}
Thus, the functions $u_j(t), j=1,\dots,m,$ defined by the Duhamel integrals \eqref{solution91} satisfy system \eqref{eq9}.
The fact that these functions also satisfy initial conditions \eqref{cauchy9} immediately follows from the definition of $u_j(t), j=1,\dots,m,$ by \eqref{solution91}.
\eproof

If the system coefficients satisfy $a_{ij}=0$ for $i \neq j,$  the system becomes decoupled in memory, and the history of each component evolves independently of the others. In this case, $Q(t) \equiv 0$  and the fractional impulses depend only on the history of the corresponding components. Consequently, we obtain the following theorem for fractional vector-order systems. 
\begin{theorem} 
\label{cor}
The solution of the Cauchy problem for the system of fractional-order evolution equations
\[
  D_{\ast}^{\alpha_{i}}u_{i}(t) = \sum_{j=1}^{m}f_{ij} u_{j}(t) + h_i(t), \ t>0, \quad u_{i}(0)= 0, i=1,\dots,m,
\]
has the representation $u_i(t)=\int_0^t v_i(t,\tau)d\tau,$ where $\big(v_1(t,\tau),\dots,v_m(t,\tau)\big)$ is a solution to the Cauchy problem
\[
  D_{\ast}^{\alpha_{i}}v_{i}(t,\tau) = \sum_{j=1}^{m}f_{ij} v_{j}(t,\tau), \ t>\tau, \quad v_i(\tau+,\tau)=D_+^{1-\alpha_{i}}h_i(\tau), \ i=1,\dots,m.
\]
\end{theorem}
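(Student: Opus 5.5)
This is the special case of Theorem \ref{thm1} with $A=I$: the off-diagonal entries $a_{ij}$, $i\neq j$, vanish and the diagonal orders are $\alpha_{ii}=\alpha_i$. The plan is to reduce to that theorem and check that the correction term $G$ disappears. One could also verify the identity directly, essentially repeating the diagonal part of the proof of Theorem \ref{thm1}; I sketch both routes.

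\textbf{Step 1: $G\equiv 0$.} When $a_{kj}=0$ for $k\neq j$, the right-hand side of system \eqref{int_eq0} vanishes, and so do all off-diagonal operator terms on the left. By item (2) of the Remark following Lemma \ref{volterra1}, the unique solution is $g_j\equiv 0$. The compatibility requirement $g_j(0)=h_j(0)$ plays no role here. No singular terms $t^{-\beta_{kj}}$ appear either, since they only arise from off-diagonal couplings. Consequently the impulse \eqref{cauchy51} reduces to $v_i(\tau+,\tau)=D_{+}^{1-\alpha_i}h_i(\tau)$, which is exactly the initial condition in the statement.

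\textbf{Step 2: direct verification.} Set $u_i(t)=\int_0^t v_i(t,\tau)\,d\tau$. Lemma \ref{lemma2} gives
\[
u_i'(t)=D_+^{1-\alpha_i}h_i(t)+\int_0^t \partial_t v_i(t,\tau)\,d\tau .
\]
Apply $J^{1-\alpha_i}$ to both sides. For the first term, Lemma \ref{lemma1} (the Abel inversion, valid for $h_i$ continuously differentiable or $C^\lambda$) gives $J^{1-\alpha_i}D_+^{1-\alpha_i}h_i=h_i$. For the second term, interchange $J^{1-\alpha_i}$ with the $\tau$-integral by Fubini, obtaining
\[
\int_0^t {}_\tau J^{1-\alpha_i}\partial_t v_i(t,\tau)\,d\tau=\int_0^t {}_\tau D_{\ast,t}^{\alpha_i}v_i(t,\tau)\,d\tau .
\]
Since $u_i(0)=0$, the left-hand side is $J^{1-\alpha_i}u_i'=D_\ast^{\alpha_i}u_i$. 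Then use the homogeneous equation for $v$ and pull the operators $f_{ij}$ out of the integral; they are closed, so this is justified by Hille's theorem provided the integrands are Bochner integrable. This yields $D_\ast^{\alpha_i}u_i=\sum_j f_{ij}u_j+h_i$, and the initial condition $u_i(0)=0$ is immediate.

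\textbf{Main obstacle.} The only delicate point is the Fubini step. The inner fractional integral has lower limit $\tau$, because $v_i(\cdot,\tau)$ lives on $t>\tau$, so one must check that $\int_0^t\int_\tau^t (t-s)^{-\alpha_i}\partial_s v_i(s,\tau)\,ds\,d\tau$ equals $J^{1-\alpha_i}\int_0^{\cdot}\partial_s v_i(\cdot,\tau)\,d\tau$. This follows from swapping the order of integration over the triangle $0<\tau<s<t$, given the joint continuity hypothesis of Lemma \ref{lemma2}. With that assumed, the argument is routine.
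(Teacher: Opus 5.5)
Your proposal is correct and follows the paper's route. The paper obtains Theorem~\ref{cor} by specializing Theorem~\ref{thm1} to $a_{ij}=0$ for $i\neq j$: then $G\equiv 0$ and the coupled impulse \eqref{cauchy51} collapses to $D_+^{1-\alpha_i}h_i(\tau)$. Your Step~2 re-runs the diagonal part of the proof of Theorem~\ref{thm1}; your Fubini swap over the triangle $0<\tau<s<t$ is what the paper loosely calls the semigroup property.

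Your remark that the compatibility condition $g_j(0)=h_j(0)$ plays no role here is a worthwhile refinement. Read literally, Lemma~\ref{volterra1} with $G\equiv 0$ would force $h_j(0)=0$, whereas your direct verification needs no such restriction.
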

\begin{remark} Conclusion similar to Theorem \ref{cor} is valid in the case of systems given by the fractional derivatives in the sense of Riemann-Liouville. In the special case $m=1,$ Theorems \ref{thm1} and \ref{thm11} recover the fractional Duhamel principle established in \cite{US06,US07}.
\end{remark}

Now consider examples to illustrate obtained results.

\begin{example}
Consider the 2$\times$2 fully coupled system of evolution equations with Caputo fractional derivatives
\begin{equation}\label{eq7}
\begin{cases}
D_{\ast}^{\alpha_{11}} u_1(t) + a_{12} D_{\ast}^{\alpha_{12}} u_2(t) = f_{11} u_1(t) + f_{12} u_2(t) + h_1(t), \\
a_{21} D_{\ast}^{\alpha_{21}} u_1(t) + D_{\ast}^{\alpha_{22}} u_2(t) = f_{21} u_1(t) + f_{22} u_2(t) +h_2(t),
\end{cases}
\end{equation}
with initial conditions
\[
u_1(0) = \vf_1, \quad u_2(0) = \vf_2,
\]
where $a_{12}a_{21} \neq 1$ and all orders $\alpha_{ij} > 0$ are different and satisfy the sum condition
\[
\alpha_{11} + \alpha_{22} = \alpha_{12} + \alpha_{21}.
\]
We also assume that $\vf_1, \vf_2 \in \X$ and $h_1, h_2 \in C^{\lambda}([0,T],\X).$

To apply Duhamel's principle, we first find the solution of the following system of Volterra type fractional order integro-differential equations

\[
\begin{cases}
 g_1(t) + a_{12} J^{\beta} g_2(t) = a_{12} J^{\beta} h_2(t), \\[4pt]
 a_{21} D_+^{\beta} g_1(t) + g_2(t) = a_{21} D_+^{\beta} h_1(t),
\end{cases}
\]
with the initial condition $q_1(0)=h_1(0)$ and where $0<\beta= \alpha_{11}-\alpha_{12} = \alpha_{21}-\alpha_{22} <\lambda.$ 
Applying the Laplace transform we obtain the algebraic system
\[
\begin{cases}
 \tilde g_1(s) + a_{12}s^{-\beta}\tilde g_2(s)
 =
 a_{12}s^{-\beta}\tilde h_2(s), \\[6pt]
 a_{21}s^{\beta}\tilde g_1(s) + \tilde g_2(s)
 =
 a_{21}s^{\beta}\tilde h_1(s).
\end{cases}
\]
Since the determinant equals $1-a_{12}a_{21}\neq 0$, the system has a unique solution
\[
\tilde g_1(s)
=
\frac{a_{12}}{1-a_{12}a_{21}}
\bigl(
s^{-\beta}\tilde h_2(s)-a_{21}\tilde h_1(s)
\bigr),
\]
\[
\tilde g_2(s)
=
\frac{a_{21}}{1-a_{12}a_{21}}
\bigl(
s^{\beta}\tilde h_1(s)-a_{12}\tilde h_2(s)
\bigr).
\]
Taking inverse Laplace transforms, we have 

\[
g_1(t)
=
\frac{a_{12}}{1-a_{12}a_{21}}
\bigl(
J^{\beta} h_2(t) - a_{21} h_1(t)
\bigr),
\]

\[
g_2(t)
=
\frac{a_{21}}{1-a_{12}a_{21}}
\bigl(
D_+^{\beta} h_1(t) - a_{12} h_2(t)
\bigr),
\]
since $(J^{\beta} h_1)(0)=0.$
Now, let
\begin{align*}
h_1^{\ast}(t) &= D_+^{1-\alpha_{11}} \Big(h_1(t) -g_1(t)\Big) \quad \text{and}
\quad
h_2^{\ast}(t) = D_+^{1-\alpha_{22}} \Big(h_2(t) - g_2(t)\Big).
\end{align*}
Then in accordance with Theorem \ref{thm1} the solution of problem \eqref{eq7} can be represented through Duhamel's integral
\[
U(t)= S(t) \Phi + \int_0^t S(t-\tau)  H^{\ast}(\tau)d\tau, 
\]
where $S(t)$ is a matrix-valued solution operator of the homogeneous system associated to \eqref{eq7}, and 
\[
\Phi=\begin{bmatrix} \vf_1 \\ \vf_2  \end{bmatrix}, \quad H^{\ast}(t) = \begin{bmatrix} h_1^{\ast}(t) \\ h_2^{\ast}(t)  \end{bmatrix}.
\] 

\end{example}

\begin{example} 
Let the matrix-valued differential operator $F(D_x)$ with  $ D_x=\frac{d}{idx},$ $ x \in \re,$ be given by the matrix-symbol 
\[
\mathcal{F}(\xi) = 
\begin{bmatrix} 
-|\xi|^2 & 0 \\
i \xi     & -|\xi|^2
\end{bmatrix}, \quad \xi \neq 0.
\]
Consider the nonhomogeneous system with matrix-valued operator on the right corresponding to the symbol $\mathcal{F}(\xi):$ 
\begin{equation}
\label{ex1}
\begin{bmatrix}
D_{\ast}^{1/2} \\ D_{\ast}^{1/3}
\end{bmatrix}
\mathcal{U}(t,x)=
\begin{bmatrix} 
D_x^2 & 0 \\
iD_x     & D_x^2
\end{bmatrix} 
\mathcal{U}(t,x) +\mathcal{H}(t,x), \quad t>0, \ x \in \re,
\end{equation}
with the initial condition 
\begin{equation}
\label{ex2}
\mathcal{U}(0,x)= \Phi(x) = (\vf_1(x),\vf_2(x)), \quad x \in \re,
\end{equation}
where $H \in AC^([0,T], L_1(\re))^2$ and $\Phi(x) \in L_1(\re)\times L_1(\re).$
In equation \eqref{ex1} the time-fractional orders are $\alpha_{11}=1/2, \alpha_{22}=1/3$ and coefficients $a_{12}=a_{21}=0,$ implying $g_1(t)=g_2(t)=0.$ Hence, in accordance with Theorem \ref{cor} the solution of the Cauchy problem \eqref{ex1}-\eqref{ex2} has
the representation
\[
U(t,x)= S(t, D_x) \Phi (x) + \int_0^t S(t-\tau,D_x) \begin{bmatrix} D_{\ast}^{1/2} \\ D_{\ast}^{2/3} \end{bmatrix} H(\tau,x)d\tau, 
\]
where $S(t,D)$ is a matrix-valued solution operator of the homogeneous system associated to \eqref{ex1}. 
The solution operator $S(t,D)$ has the matrix-symbol  \cite{Umarov2024}
\begin{align*}
S(t,\xi) 
&=\begin{bmatrix}
E_{1/2}(-|\xi|^2 t^{1/2}) & 0 \\
 i\xi \Big( E_{1/2}(-|\xi|^2 t^{1/2})\ast t^{-2/3}E_{1/3,1/3} (-|\xi|^2 t^{1/3}) \Big) & E_{1/3}(-|\xi|^2 t^{1/3})
  \end{bmatrix},
\end{align*}
where $E_{\alpha}(z)$ and $E_{\alpha,\beta}(z)$ denote the one- and two-parameter Mittag-Leffler functions, respectively.

Thus, the unique solution of Cauchy problem \eqref{ex1}-\eqref{ex2} has the representation

\begin{align} \label{u1}
u_1(t,x) &= E_{1/2}(D_x^2 t^{1/2}) \vf_1(x) + \int_0^t  E_{1/2}(D_x^2 (t-\tau)^{1/2}) D_+^{1/2}h_1(\tau,x) d\tau,      \\
u_2(t,x) &=  D_x \Big( E_{1/2}(D^2 t^{1/2})\ast t^{-2/3}E_{1/3,1/3} (D_x^2 t^{1/3}) \Big) \vf_1(x) \notag
\\
&+ \int_0^t D_x \Big( E_{1/2}(D^2 (\cdot)^{1/2}) \ast (\cdot)^{-2/3}E_{1/3,1/3} (D_x^2 (\cdot)^{1/3}) \Big)(t-\tau) D_+^{1/2} h_1(\tau,x) d \tau \notag
\\ \label{u2}
&+
E_{1/3}(D_x^2 t^{1/3}) \vf_2(x) + \int_0^t E_{1/3}(D_x^2 (t-\tau)^{1/3}) D_+^{2/3}h_2(\tau,x)d\tau.
\end{align}
Note that the operator $E_{1/2}(D_x^2 t^{1/2}) $ in \eqref{u1} is defined by (see \cite{Umarov2024})
\[
E_{1/2}(D_x^2 t^{1/2}) \vf(x) = \frac{1}{2 \pi} \int_{\re} e^{-ix\xi} E_{1/2}(-|\xi|^2 t^{1/2}) F[\vf](\xi) d\xi,
\]
where $F[\vf](\xi)$ stands for the Fourier transform of $\vf(x).$ Due to decreasing behavior of the Mittag-Leffler function $E_{1/2}(-u)$ when $u \to \infty$ \cite{Umarov2015}, the integral on the right is meaningful. The operators in \eqref{u2} are defined similarly.

\end{example}

\begin{remark}
Finding an explicit representation of the solution operator $S(t)$ for fully coupled systems is a challenging problem. We therefore devote a separate paper to the construction of solution operators for such systems.
\end{remark}

\section{Conclusions} \label{sec:6}

In conclusion, the fractional Duhamel framework highlights the fundamentally nonlocal nature of memory in fractional order dynamics. For a single fractional evolution equation, the instantaneous fractional impulse $D_+^{1-\alpha}h(\tau)$ appearing in the initial conditions of the corresponding homogeneous problem encodes the accumulated influence of the past and governs the subsequent evolution. In contrast, for fully coupled fractional dynamical systems, memory effects acquire a genuinely collective character. As indicated by \eqref{cauchy51}, the instantaneous impulses are no longer determined solely by the history of each state variable in isolation, but rather by the intertwined histories of all coupled components. This emphasizes that coupling in fractional systems does more than link present states - it entangles their pasts. However, if the system coefficients satisfy $ a_{ij} = 0 $ for all  $i \neq j$, the components remain dynamically coupled, but the fractional memory contributions of each component become independent ($Q(t) \equiv 0$). In this case, while the present dynamics of each component still evolve according to its own equation, the historical influence of one component no longer affects the others - a phenomenon referred to as memory decoupling. This distinction highlights that in fractional systems, coupling can entangle not only present states but also past histories. The fractional Duhamel principle thus provides a rigorous framework to describe how distributed memory and coupling jointly shape the evolution of complex fractional dynamical systems.

\end{document}